\newenvironment{proof}{\noindent {\bf Proof:}}{\hfill $\Box$}
\newtheorem{theorem}{Theorem}
\newtheorem{lemma}{Lemma}
\newtheorem{corollary}{Corollary}
\newtheorem{assumption}{Assumption}
\title{\bf Convergence rates of moment-sum-of-squares hierarchies for volume approximation of semialgebraic sets}
\begin{document}

\author{Milan Korda$^1$, Didier Henrion$^{2,3,4}$}

\footnotetext[1]{Department of Mechanical Engineering, University of California, Santa Barbara, CA 93106-5070. {\tt milan.korda@engineering.ucsb.edu}.}
\footnotetext[2]{CNRS; LAAS; 7 avenue du colonel Roche, F-31400 Toulouse; France. {\tt henrion@laas.fr}}
\footnotetext[3]{Universit\'e de Toulouse; LAAS; F-31400 Toulouse; France.}
\footnotetext[4]{Faculty of Electrical Engineering, Czech Technical University in Prague,
Technick\'a 2, CZ-16626 Prague, Czech Republic.}

\date{Draft of \today}

\maketitle

\begin{abstract}
Moment-sum-of-squares hierarchies of semidefinite programs can be used to approximate the volume of a given compact basic semialgebraic set $K$. The idea consists of approximating from above the indicator function of $K$ with a sequence of polynomials of increasing degree $d$, so that the integrals of these polynomials generate a convergence sequence of upper bounds on the volume of $K$. We show that the asymptotic rate of this convergence is at least $O(1 / \log \log d)$.
\end{abstract}

\begin{center}\small
{\bf Keywords:} moment relaxations, polynomial sums of squares, convergence rate, semidefinite programming, approximation theory.
\end{center}

\section{Introduction}

The moment-sum-of-squares hierarchy (also known as the Lasserre hierarchy) of semidefinite programs was originally introduced in the context of polynomial optimization, and later on extended in various directions, see e.g. \cite{l09} for an introductory survey and \cite{l10} for a comprehensive treatment.

In our companion note \cite{khj16} we have studied the convergence rate of this hierarchy applied to optimal control. In the present rate, we carry out the convergence analysis of the hierarchy tailored in \cite{hls09} for approximating the volume (and all the moments of the uniform measure) of a given semi-algebraic set.

Let us first recall the main idea of \cite{hls09}. Given a
compact basic semi-algebraic set
\begin{equation}\label{eq:K}
K := \{ x \in \mathbb{R}^n : g_i^K(x) \ge 0, \: i = 1,\ldots,n_{K} \}
\end{equation}
described by polynomials $(g_i^K)_i \subset \mathbb{R}[x]$, we want to approximate numerically its volume
\[
\mathrm{vol}\:K := \int_K dx.
\]
We assume that we are also given a compact basic semi-algebraic set
\begin{equation}\label{eq:K}
X := \{ x \in \mathbb{R}^n : g_i^X(x) \ge 0, i = 1,\ldots,n_{X} \}
\end{equation}
described by polynomials $(g_i^X)_i \subset \mathbb{R}[x]$, such
that $K$ is contained in $X$, which is in turn contained in the unit Euclidean ball:
\[
K \subset X \subset B_n := \{x \in {\mathbb R}^n : \|x\|_2 \leq 1\}
\]
and such that the moments of the uniform (a.k.a. Lebesgue) measure over  $X$ are known analytically or are easy to compute, or equivalently, such that the integral of a given polynomial over $X$ can be evaluated easily.
This is the case for instance if $X$ is a box or a ball, e.g. $X=B_n$. The set $K$, on the contrary, can be a complicated (e.g., non-convex or disconnected) set for which the volume (as well as the higher Lebesgue moments) are hard to compute. In addition we invoke the following technical assumption:
\begin{assumption}\label{ass:Int}
The origin belongs to the interior of $K$, i.e, $0 \in \mathrm{int}\:K$.
\end{assumption}
This assumption can be satisfied whenever the set $K$ has a non-empty interior by translating the set such that the origin belongs to the interior.

For notationally simplicity, we let $g_0^K := g_0^X := 1$. Moreover, since $K$ respectively $X$ are included in the unit ball $B_n$, we assume without loss of generality that one of the $g_i^K$ resp. $g_i^X$, $i>0$, is equal to $1 - \sum_{k=1}^n x_k^2$. Attached to the set $K$ is a specific convex cone called truncated quadratic module defined by
\[
Q_{d}(K)  := \Big\{ \sum_{i=0}^{n_K} g_i^K s_i^K  : s_i^K \in \Sigma[x], \: \mathrm{deg}(g_i^K s_i^K) \leq 2 \lfloor\frac{d}{2}\rfloor \Big\}
\]
where $\Sigma[x] \subset \mathbb{R}[x]$ is the semidefinite representable convex cone of polynomials that can be written as sums of squares of polynomials. Attached to the outer bounding set $X$, we define analogously the truncated quadratic module
\[
Q_{d}(X)  := \Big\{ \sum_{i=0}^{n_X} g_i^X s_i^X  : s_i^X \in \Sigma[x], \: \mathrm{deg}(g_i^X s_i^X) \leq 2 \lfloor\frac{d}{2}\rfloor \Big\}.
\]
Consider then the following hierarchy of convex optimization problems indexed by degree $d$:
\begin{equation}\label{opt:sos}
\begin{array}{rclll}
v_d^\star & := & \inf\limits_{p \in \mathbb{R}[x]_d} &  \displaystyle\int_X p   \\
&& \hspace{0.6cm} \mathrm{s.t.}  & p - 1 \in Q_d(K)\\
&&& p \in Q_d(X)
\end{array}
\end{equation}
where the decision variable belongs to $\mathbb{R}[x]_d$, the finite-dimensional vector space of polynomials of total degree less than or equal to $d$. In problem (\ref{opt:sos}) the objective function is a linear function, the integral of $p$ over $X$, for which we use the simplified notation
\[
\int_X p := \int_X p(x)dx
\]
here and in the remainder of the document. Since we assume that $X$ is chosen such that the moments of the uniform measure are known, the objective function is an explicit linear function of the coefficients of $p$ expressed in any basis of ${\mathbb R}[x]_d$. The truncated quadratic modules $Q_d(K)$ and $Q_d(X)$ are semidefinite representable, so
problem (\ref{opt:sos}) translates to a finite-dimensional semidefinite program. Moreover, it is shown in \cite[Theorem 3.2]{hls09} that the sequence $(v_d^\star)_{d \in \mathbb N}$ converges from above to the volume of $K$, i.e.
\[
v_d^\star \geq v_{d+1}^\star \geq \lim_{d\to \infty} v_d^\star=\mathrm{vol}\:K.
\]
The goal of this note is to analyze the rate of convergence of this sequence.

\section{Convergence rate}

Let us first recall several notions from approximation theory. Given a bounded measurable function $f : X \to \mathbb R$ we define
\[
\begin{array}{rrl}
\omega_f^x(t) := & \displaystyle \sup_{y \in X} & |f(y) - f(x) | \\ 
& \mathrm{s.t.} & \| y -x  \|_2 \le t
\end{array}
\]
as the modulus of continuity of $f$ at a point $x \in X$. In addition, we define
\[
\bar{\omega}_f(t) := \int_X \omega_f^x(t) dx
\]
as the averaged modulus of continuity of $f$ on $X$. Finally we define
\[
\begin{array}{rrl}
e_f(d) := & \displaystyle \inf_{p \in \mathbb{R}[x]_d} & \displaystyle \int_X (p-f) \\
& \mathrm{s.t.} & p \ge f \; \mathrm{on}\; X
\end{array}
\]
as the error of the best approximation to $f$ from above in the $L_1$ norm by polynomials of degree no more than $d$.

\begin{theorem}[\cite{b10}]\label{thm:bhaya}
For any bounded measurable function $f$ and for all $d \in \mathbb N$ it holds
\begin{equation}
e_f(d) \le \bar{c}\:\bar{\omega}_f(1/d)
\end{equation}
for some constant $\bar{c}$ depending only on $f$.
\end{theorem}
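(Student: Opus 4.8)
The plan is to prove the stated bound by exhibiting, for each $d$, a single feasible polynomial majorant and then passing to the infimum defining $e_f(d)$. Concretely, it suffices to construct $p \in \mathbb{R}[x]_d$ with $p \ge f$ on $X$ and $\int_X (p - f) \le \bar c\, \bar\omega_f(1/d)$; since $\bar c$ is allowed to depend on $f$, I will freely absorb factors such as $\|f\|_{\infty}$ and $\mathrm{vol}\,X$ into it. Fix a scale $t = A/d$ with $A$ a large constant to be chosen, and introduce the upper envelope
\[
F(x) := \sup\{ f(y) : y \in X, \ \|y - x\|_2 \le t \}.
\]
By construction $F \ge f$ on $X$, and since $F(x) - f(x) = \sup_{\|y-x\|_2 \le t}(f(y) - f(x)) \le \omega_f^x(t)$, integrating gives $\int_X (F - f) \le \bar\omega_f(t)$. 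The envelope $F$ is not a polynomial and need not be continuous, so the remaining work is to replace it by a polynomial majorant without inflating the $L_1$ error.

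To that end I would use a nonnegative polynomial kernel $k_d$ of degree at most $d$ on the ball — a multivariate Jackson-type kernel, e.g. a suitable normalized power of a Fej\'er-type kernel — chosen so that $k_d \ge 0$, $\int k_d = 1$, and the kernel concentrates at scale $1/d$ with fast tail decay, i.e. $\varepsilon_s := \int_{\|z\|_2 \ge s} k_d(z)\,dz$ is small once $s \gtrsim 1/d$. Define the candidate as the smoothing of $F$ against this kernel, $q(x) := \int F(x - z)\,k_d(z)\,dz$ (extending $F$ off $X$ by a bounded value so the convolution makes sense), which is again a polynomial of degree at most $d$. For $\|z\|_2 \le 2t$ the envelope property yields $F(x-z) \ge f(x)$, so the near-diagonal part of the integral already dominates $f(x)$; the tail $\|z\|_2 > 2t$ can only subtract at most $\|f\|_\infty\, \varepsilon_{2t}$. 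Hence $q(x) \ge f(x) - 2\|f\|_\infty\, \varepsilon_{2t}$, and setting $p := q + 2\|f\|_\infty\, \varepsilon_{2t}$ restores the one-sided constraint $p \ge f$ on $X$ while keeping $\deg p \le d$.

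It then remains to bound $\int_X (p - f)$. I would split it as $\int_X(q - F) + \int_X (F - f) + 2\|f\|_\infty\,\varepsilon_{2t}\,\mathrm{vol}\,X$. The middle term is at most $\bar\omega_f(t)$ by the envelope estimate above, and the last term is controlled once $\varepsilon_{2t}$ is made comparable to $t$. For the smoothing error, writing $q(x) - F(x) = \int (F(x-z) - F(x))\,k_d(z)\,dz$ and integrating in $x$ first gives
\[
\int_X |q - F| \le \int k_d(z)\,\bar\omega_F(\|z\|_2)\,dz,
\]
which I would estimate by splitting the $z$-integral at $\|z\|_2 = 2t$: the core contributes at most $\bar\omega_F(2t)$ and the tail at most $\|f\|_\infty\,\varepsilon_{2t}$. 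Finally I would invoke two standard properties of averaged moduli: the comparison $\bar\omega_F(s) \le C\,\bar\omega_f(C(s+t))$ bounding the modulus of the envelope by that of $f$, and the quasi-monotone scaling $\bar\omega_f(\lambda u) \le C(\lambda)\,\bar\omega_f(u)$. Together these collapse every term to a constant multiple of $\bar\omega_f(1/d)$, yielding the claim with $\bar c$ depending only on $f$ (through $\|f\|_\infty$ and the scaling constants).

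The main obstacle is that a nonzero polynomial cannot be compactly supported, so the one-sided constraint $p \ge f$ can never be enforced exactly by convolution; the entire argument hinges on making the tail mass $\varepsilon_{2t}$ negligible. This forces the concentration scale $1/d$ of $k_d$ to be well inside the envelope scale $t$, i.e. $A$ must be taken large, and the resulting loss is absorbed into $\bar c$ via the scaling property of $\bar\omega_f$. The second delicate point is the comparison lemma $\bar\omega_F \le C\,\bar\omega_f$: because $F$ inherits the discontinuities of $f$, this must be argued carefully from the definition of the envelope, and it is precisely the kind of estimate furnished by the averaged-modulus machinery of \cite{b10}. Boundary effects near $\partial B_n$, entering through the off-$X$ extension of $F$ and the truncation of the convolution, would likewise need to be handled, though they only affect the constants.
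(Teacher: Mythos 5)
You should first be aware that the paper does not prove Theorem~\ref{thm:bhaya} at all: it is imported from the reference \cite{b10} with no argument given, so there is no in-paper proof to compare against and your proposal has to stand on its own as a proof of a one-sided $L_1$ Jackson-type theorem.

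As written it does not stand, and the failure is exactly at the step you flag as the main obstacle but then wave away: the claim that ``the last term is controlled once $\varepsilon_{2t}$ is made comparable to $t$.'' The lift $p = q + 2\|f\|_\infty\varepsilon_{2t}$ contributes $2\|f\|_\infty\,\varepsilon_{2t}\,\mathrm{vol}\,X$ to $\int_X(p-f)$, and the theorem requires this to be $O(\bar\omega_f(1/d))$, which for $f=I_K$ (the only case the paper uses) decays like $1/d$. But a nonnegative polynomial kernel of degree at most $d$ with unit mass and first moment $O(1/d)$ (needed for the smoothing error) has tail mass beyond radius $2t=2A/d$ bounded below by a positive constant independent of $d$: a Jackson-type kernel $\bigl(\sin(nu/2)/\sin(u/2)\bigr)^{2r}$ of total degree $\sim rn\le d$ has tail $\asymp (ns)^{-(2r-1)}$ at scale $s$, which at $s=2A/d$, $n=d/r$ is $\asymp (r/2A)^{2r-1}$ --- constant in $d$. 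Forcing this below $1/d$ requires $A$ or $r$ to grow with $d$; growing $A$ costs a factor $\sim A$ in the comparison $\bar\omega_f(A/d)$ versus $\bar\omega_f(1/d)$, and growing $r$ shrinks $n=d/r$ until $ns<1$ and there is no tail decay at all. So the convolve-and-lift scheme loses at least a logarithmic factor and cannot yield the stated bound with a $d$-independent $\bar c$. The established proofs of such one-sided estimates (the Sendov--Popov averaged-modulus theory on which \cite{b10} builds) obtain one-sidedness by a different mechanism: first construct a one-sided \emph{spline} majorant on a knot mesh of width $\sim 1/d$ --- splines are locally supported, so exact one-sidedness costs nothing and the error is $\le C\,\bar\omega_f(1/d)$ --- and then replace each truncated-power basis function by a degree-$d$ polynomial majorant/minorant with $L_1$ error $O(1/d)$, a separate construction based on canonical (Gauss/Chebyshev-type) point sets. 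That second ingredient is the idea missing from your sketch. Your remaining steps --- the envelope bound $\int_X(F-f)\le\bar\omega_f(t)$ and the comparison $\bar\omega_F\lesssim\bar\omega_f$ --- are fine in spirit, modulo the slip that $F(x-z)\ge f(x)$ requires $\|z\|_2\le t$, not $2t$.
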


Theorem~\ref{thm:bhaya} links the $L_1$ approximation error from above to the averaged modulus of continuity. Now we derive an estimate on the averaged modulus of continuity when $f = I_K$, the indicator function of $K$ equal to 1 on $K$ and 0 outside.

\begin{lemma}\label{lem:tau_est}
For all $t \in [0,1]$ it holds
\[
\bar{\omega}_{I_K}(t) \le c \: t
\]
for some constant $c$ depending only on $K$ and $X$.
\end{lemma}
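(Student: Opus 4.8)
The plan is to exploit the fact that $I_K$ takes only the two values $0$ and $1$, which collapses the pointwise modulus of continuity into an indicator of a geometric event. First I would observe that for every $x \in X$ the difference $|I_K(y)-I_K(x)|$ is either $0$ or $1$, so $\omega_{I_K}^x(t) \in \{0,1\}$, with $\omega_{I_K}^x(t) = 1$ precisely when there is some $y \in X$ satisfying $\|y-x\|_2 \le t$ and $I_K(y) \ne I_K(x)$. Integrating the indicator then gives
\[
\bar{\omega}_{I_K}(t) = \int_X \omega_{I_K}^x(t)\,dx = \mathrm{vol}\,\{ x \in X : \omega_{I_K}^x(t) = 1 \}.
\]

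Next I would show that this support set lies in the $t$-neighborhood of the boundary $\partial K$. If $\omega_{I_K}^x(t)=1$, witnessed by some $y$ with $\|y-x\|_2 \le t$ and $I_K(y)\neq I_K(x)$, then the segment $[x,y]$ joins a point of $K$ to a point of its complement; since $K$ is closed, this segment must contain a point $z \in \partial K$, and $\|z-x\|_2 \le \|y-x\|_2 \le t$. Hence $\mathrm{dist}(x,\partial K) \le t$, so
\[
\bar{\omega}_{I_K}(t) \le \mathrm{vol}\,\{ x \in \mathbb{R}^n : \mathrm{dist}(x,\partial K) \le t \} =: \mathrm{vol}\,(\partial K)_t .
\]

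It remains to bound the volume of this tube by $c\,t$. Here I would invoke the semialgebraic structure: since $K$ is closed, at any boundary point at least one defining inequality is active, so $\partial K$ is compact and contained in the finite union $\bigcup_{i=1}^{n_K} \{x \in X : g_i^K(x) = 0\}$ of zero sets of the (not identically vanishing) polynomials $g_i^K$, each a compact semialgebraic set of dimension at most $n-1$. For such a set the $(n-1)$-dimensional Minkowski content is finite, equivalently $\mathrm{vol}\,(\partial K)_t \le \tilde c\, t$ for $t \in (0,1]$; this is the linear tube estimate for tame sets of codimension one, which follows from a Whitney stratification of $\partial K$ into finitely many smooth cells together with the classical (Weyl) tube formula applied on each cell, the strata of dimension below $n-1$ contributing powers $t^{\ge 2}$. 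For $t$ bounded away from $0$ the bound is trivial, the left-hand side being at most $\mathrm{vol}\,B_n$, so enlarging the constant yields $\bar{\omega}_{I_K}(t) \le c\,t$ for all $t\in[0,1]$.

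The step I expect to be the main obstacle is the linear tube estimate $\mathrm{vol}\,(\partial K)_t = O(t)$: making it rigorous requires controlling the geometry of a possibly singular real variety, and the cleanest route is to cite finiteness of Minkowski content for semialgebraic sets (via stratification) rather than to attempt a direct computation with the individual polynomials $g_i^K$.
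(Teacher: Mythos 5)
Your proposal is correct and follows essentially the same route as the paper: reduce $\bar{\omega}_{I_K}(t)$ to the volume of the $t$-neighbourhood of $\partial K$, then bound that tube volume linearly in $t$ by decomposing $\partial K$ into finitely many smooth pieces of dimension at most $n-1$ and applying Weyl's tube formula to each. Your write-up is somewhat more explicit about the steps the paper leaves implicit (the segment/intermediate-value argument showing $\omega_{I_K}^x(t)=1$ forces $\mathrm{dist}(x,\partial K)\le t$, and the stratification justifying the smooth decomposition), but the key estimate and overall structure are identical.
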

\begin{proof}
Let $\partial K$ denote the boundary of $K$ and let
\[
\partial K(t) := \{ x \in X : \min_{y \in \partial K} \|x-y\|_2 \le t \}
\]
denote the set of all points whose Euclidean distance to the boundary of $X$ is no more than $t$.
Then we have $\omega_{I_K}^x(t) = 0$ for all $x \not\in \partial K(t)$. Hence
\[
\bar{\omega}_{I_K}(t) = \int_X \omega^x_{I_K}(t) dx = \int_{\partial K(t)} \omega_{I_K}^x(t)dx \le \int_{\partial K(t)} 1\, dx = \mathrm{vol}_:\partial K (t).
\]
Therefore we need to bound the volume of $\partial K(t)$. In order to do so, we decompose
\[
\partial K = \bigcup_{i=1}^{n_\partial} \partial K_i
\]
where each $\partial K_i$ is a smooth manifold of dimension no more than $n-1$ and $n_\partial$ is finite.  Defining
\[
\partial K_i(t) := \{ x \in X : \min_{y \in \partial K_i}\|x-y\|_2 \le t \},
\]
Weyl's tube formula \cite{w39} yields the bound
\[
\mathrm{vol}\:\partial K_i(t) \le c_i \: t
\]
for all $t\in [0,1]$ and for some constant $c_i$ depending on $K$ and $X$ only. Hence we get
\[
\mathrm{vol}\:\partial K(t) \le \sum_{i=1}^{n_\partial} \mathrm{vol}\:\partial K_i(t) \le \sum_{i=1}^{n_\partial}c_i\:t = c\:t 
\]
where $c :=  \sum_{i=1}^{n_\partial}c_i$ depends on $K$ and $X$ only, as desired.
\end{proof}

This leads to the following crucial estimate on the rate of convergence of the best polynomial approximation from above to the indicator function of $K$:
\begin{theorem}\label{thm:approx_main}
For all $d \in \mathbb N$ it holds
\begin{equation}\label{eq:convRate_approx}
e_{I_K}(d) \le \frac{c_1}{d}
\end{equation}
for some constant $c_1$ depending only on $K$ and $X$.
\end{theorem}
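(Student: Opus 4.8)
The plan is to obtain (\ref{eq:convRate_approx}) as a direct composition of the two preceding results: Theorem~\ref{thm:bhaya} bounds the one-sided $L_1$ approximation error of any bounded measurable function by its averaged modulus of continuity, while Lemma~\ref{lem:tau_est} shows that for the indicator function this modulus grows at most linearly in the scale $t$. Combining them at scale $t = 1/d$ yields the claimed $O(1/d)$ rate, so no new analysis is required beyond checking hypotheses and tracking constants.

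First I would apply Theorem~\ref{thm:bhaya} with $f = I_K$. Since $K$ is compact basic semialgebraic, it is Lebesgue measurable, and its indicator is bounded by $1$; hence $I_K$ meets the hypotheses of the theorem, which gives
\[
e_{I_K}(d) \le \bar{c}\,\bar{\omega}_{I_K}(1/d) \qquad \text{for all } d \in \mathbb{N},
\]
with $\bar{c}$ depending only on $f = I_K$. I would then insert the scale $t = 1/d$ into Lemma~\ref{lem:tau_est}. For every $d \in \mathbb{N}$ one has $1/d \in [0,1]$, so the lemma yields $\bar{\omega}_{I_K}(1/d) \le c/d$ with $c$ depending only on $K$ and $X$. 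Chaining the two bounds gives $e_{I_K}(d) \le \bar{c}\,c/d$, and setting $c_1 := \bar{c}\,c$ completes the argument.

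The proof is otherwise routine; its substantive content lives entirely in Lemma~\ref{lem:tau_est} and in the tube-formula estimate behind it. The only point that warrants care is the bookkeeping of constants: I must observe that the constant $\bar{c}$ furnished by Theorem~\ref{thm:bhaya}, though stated to depend on $f$, depends here only on $K$ because $f = I_K$ is itself determined by $K$. Consequently the final constant $c_1 = \bar{c}\,c$ depends only on $K$ and $X$ and, crucially, is independent of $d$, as required.
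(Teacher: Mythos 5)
Your proposal is correct and matches the paper's own proof exactly: the paper also obtains the bound by composing Theorem~\ref{thm:bhaya} with Lemma~\ref{lem:tau_est} at scale $t=1/d$ and setting $c_1 := \bar{c}\,c$. Your extra remark on why $\bar{c}$ ultimately depends only on $K$ is a sensible bookkeeping observation that the paper leaves implicit.
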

\begin{proof}
Follows immediately from Theorem~\ref{thm:bhaya} and Lemma~\ref{lem:tau_est} with $c_1 := \bar{c}\: c$.
\end{proof}

We need the following assumption, which we conjecture is without loss of generality:
\begin{assumption}[Finite Gibbs phenomenon]\label{as:Gibbs}
For $d \in \mathbb N$ there is a sequence
\[
\begin{array}{rrl}
p_d \in & \underset{p \in \mathbb{R}[x]_d}{\mathrm{argmin}} & \displaystyle\int_X (p - I_K) \\
& \mathrm{s.t.} & p \ge I_K \; \mathrm{on}\; X
\end{array}
\]
and a finite constant $c_G$ such that $\max_{x \in X} p_d(x) \le c_G $.
\end{assumption}

In addition to the results from approximation theory, we need the following result which is a consequence of the fundamental result of~\cite{ns07} and of \cite[Corollary~1]{khj16}:

\begin{theorem}\label{thm:nie}
Let $p \in \mathbb{R}[x]$ be strictly positive on a set $S \in \{K,X\}$ and let Assumption~\ref{ass:Int} hold. Then $p \in Q_d(S)$ whenever
\[
d \ge c_2^S \: \mathrm{exp}\Big[\Big ( k(\deg p)(\deg p)^2 n^{\deg p}\frac{\max_{x\in S} p(x)}{\min_{x \in S} p(x)}  \Big)^{c_2^S}\Big]
\]
for some constant $c_2^S$ depending only the algebraic description of $S$ and with $k(d) = 3^{d+1}r^d$, where
\begin{equation}\label{eq:rdef}
r := \frac{1}{\sup\{s > 0 \mid [-s,s]^{n} \subset K \}},
\end{equation}
which is the reciprocal value of the length of the side of the largest box centered around the origin included in $K$\footnote{By assumption $K \subset X$ and hence it is sufficient to take $K$ in the definition of $r$ in~(\ref{eq:rdef}) rather than $S$.}.
\end{theorem}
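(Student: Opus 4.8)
The plan is to obtain the stated bound by inserting an explicit estimate for the coefficient norm of $p$, furnished by \cite[Corollary~1]{khj16}, into the effective degree bound of Nie and Schweighofer \cite{ns07}.

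First, I would recall the Nie--Schweighofer theorem in the form adapted to our setting. Because one of the defining polynomials of $S \in \{K,X\}$ is $1 - \sum_{k=1}^n x_k^2$, the quadratic module attached to $S$ is Archimedean, so \cite{ns07} applies: there is a constant $c_2^S$ depending only on the polynomials $g_i^S$ such that any $p$ with $p^\star := \min_{x \in S} p(x) > 0$ satisfies
\[
p \in Q_d(S) \qquad \text{whenever} \qquad d \ge c_2^S \, \exp\Big[\Big( (\deg p)^2 \, n^{\deg p} \, \frac{\|p\|}{p^\star} \Big)^{c_2^S}\Big],
\]
where $\|p\|$ is the particular coefficient-based norm employed in \cite{ns07}. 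Thus the only quantity left to control is the ratio $\|p\|/p^\star$.

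Second, I would bound $\|p\|$ from above by $\max_{x \in S} p(x)$, up to a degree-dependent factor; this is exactly the role of \cite[Corollary~1]{khj16}. By Assumption~\ref{ass:Int} the origin lies in the interior of $K$, so $r$ is finite and the largest centered box $[-1/r,1/r]^n$ is contained in $K$, hence in $S$ --- for $S = X$ this uses $K \subset X$, which is precisely why $K$ rather than $S$ appears in the definition of $r$. Since the coefficient norm of a degree-$d$ polynomial is controlled by its supremum on any inscribed box, Corollary~1 yields $\|p\| \le k(\deg p)\,\max_{x \in S} p(x)$ with $k(\deg p) = 3^{\deg p + 1} r^{\deg p}$: the factor $r^{\deg p}$ arises from rescaling the box to $[-1,1]^n$, and $3^{\deg p + 1}$ from the univariate passage between the Chebyshev and monomial coefficients. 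Substituting this estimate and noting that $p > 0$ on the compact set $S$ makes $\max_{x \in S} p(x)/\min_{x \in S} p(x)$ well defined gives precisely the claimed inequality, all remaining $S$-dependent constants being absorbed into $c_2^S$.

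The main obstacle, and the reason the result is presented as a consequence of two external statements, is to align the two estimates cleanly: one must verify that the norm $\|p\|$ appearing in \cite{ns07} is exactly the norm for which \cite[Corollary~1]{khj16} supplies the box-based bound, and that inserting $k(\deg p)$ multiplicatively inside the exponent introduces no further hidden $\deg p$-dependence into $c_2^S$. A secondary point is to check that $\sup$-norms on the inscribed box, which dominate the coefficient norm, are themselves dominated by $\max_{x \in S} p(x)$ because the box lies in $S$ and $p$ is positive there, so that every quantity is ultimately expressed in terms of $\max_{x \in S} p$, $\min_{x \in S} p$, $\deg p$, $n$ and $r$ alone.
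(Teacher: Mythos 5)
Your proposal is correct and follows essentially the same route as the paper, which offers no separate proof of this theorem but presents it exactly as you do: the Nie--Schweighofer degree bound from \cite{ns07}, with the coefficient norm $\|p\|$ appearing there replaced by $k(\deg p)\max_{x\in S}p(x)$ via the inscribed-box estimate of \cite[Corollary~1]{khj16}, using $[-1/r,1/r]^n\subset K\subset S$. Your closing remarks on aligning the two norms and on why $K$ rather than $S$ suffices in the definition of $r$ are precisely the points the paper leaves implicit.
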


Now we are in position to prove our main result:
\begin{theorem}
For all $\varepsilon>0$
it holds $v_d^\star - \mathrm{vol}\:K < \varepsilon $ whenever
\begin{equation}\label{eq:bound}
d  \ge  c_2 \: \mathrm{exp}\Big[\Big(  \frac{3 c_3(\epsilon)^2  (3rn)^{c_3(\epsilon)}(2c_G\:\mathrm{vol}\:B_n+\varepsilon)}{\varepsilon}    \Big)^{c_2}      \Big]  =
O\Big(\mathrm{exp}\Big[\frac{1}{\varepsilon^{3c_2}} (3rn)^{\frac{2c_1}{\varepsilon}}\Big]\Big)
\end{equation}
where $c_3(\epsilon) := \lceil 2 c_1/\varepsilon\rceil$, $r$ is defined in~(\ref{eq:rdef}) and the constants $c_1$ and $c_2$ depend only on $X$ and $K$.
\end{theorem}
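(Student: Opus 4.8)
The plan is to exhibit, for each $\varepsilon>0$, a single polynomial that is feasible for the semidefinite program (\ref{opt:sos}) at every degree $d$ satisfying (\ref{eq:bound}) and whose integral over $X$ exceeds $\mathrm{vol}\:K$ by at most $\varepsilon$. Since $v_d^\star$ is an infimum over feasible polynomials, such a construction immediately gives $v_d^\star-\mathrm{vol}\:K<\varepsilon$. The natural candidate is a small upward shift of the best $L_1$ approximant from above to the indicator function $I_K$; the shift is needed precisely to upgrade the non-strict inequalities $p\ge I_K$ into the strict positivity demanded by Theorem~\ref{thm:nie}, and its size will be what balances the two sources of error.

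Concretely, I would fix the approximation degree $d_0:=c_3(\varepsilon)=\lceil 2c_1/\varepsilon\rceil$ and take $p_{d_0}$ to be the minimizer supplied by Assumption~\ref{as:Gibbs}, so that $p_{d_0}\ge I_K$ on $X$, $\int_X(p_{d_0}-I_K)=e_{I_K}(d_0)\le c_1/d_0\le \varepsilon/2$ by Theorem~\ref{thm:approx_main}, and $\max_{x\in X}p_{d_0}(x)\le c_G$. Setting $\delta:=\varepsilon/(2\:\mathrm{vol}\:B_n)$ and $\tilde p:=p_{d_0}+\delta$, the identities $I_K=1$ on $K$ and $I_K=0$ on $X\setminus K$ give $\tilde p\ge\delta>0$ on $X$ and $\tilde p-1\ge\delta>0$ on $K$, while both $\tilde p$ and $\tilde p-1$ are bounded above by $c_G+\delta$ on $X$ and on $K$ respectively. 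Hence for $S\in\{K,X\}$ the ratio entering Theorem~\ref{thm:nie} obeys $\max/\min\le (c_G+\delta)/\delta=(2c_G\:\mathrm{vol}\:B_n+\varepsilon)/\varepsilon$, and $\deg\tilde p\le d_0=c_3(\varepsilon)$.

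I would then invoke Theorem~\ref{thm:nie} twice, for $\tilde p$ on $X$ and for $\tilde p-1$ on $K$, to deduce $\tilde p\in Q_d(X)$ and $\tilde p-1\in Q_d(K)$ once $d$ exceeds the respective thresholds. Substituting $\deg\tilde p\le c_3(\varepsilon)$ into $k(\deg p)(\deg p)^2 n^{\deg p}=3(3r)^{c_3}c_3^2 n^{c_3}=3c_3^2(3rn)^{c_3}$ together with the ratio bound above reproduces exactly the argument of the exponential in (\ref{eq:bound}); taking $c_2:=\max(c_2^K,c_2^X)$ and using that the threshold is monotone in $\deg p$ and in $c_2^S$ merges the two conditions into the single bound (\ref{eq:bound}). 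With $\tilde p$ feasible, $\int_X\tilde p=\int_X p_{d_0}+\delta\:\mathrm{vol}\:X\le \mathrm{vol}\:K+\varepsilon/2+\delta\:\mathrm{vol}\:B_n=\mathrm{vol}\:K+\varepsilon$, so $v_d^\star\le\int_X\tilde p$ yields the claim. The $O(\cdot)$ estimate then follows by letting $\varepsilon\to0$: here $c_3(\varepsilon)\sim 2c_1/\varepsilon$, the factor $(3rn)^{c_3}$ dominates as $(3rn)^{2c_1/\varepsilon}$, and the remaining polynomial-in-$1/\varepsilon$ prefactors collapse into $\varepsilon^{-3c_2}$ after the outer power $c_2$.

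The main obstacle is the tension governing the shift $\delta$: shrinking $\delta$ is what makes the objective margin $\delta\:\mathrm{vol}\:B_n$ small, yet it simultaneously inflates the ratio $\max/\min\sim c_G/\delta$ that Theorem~\ref{thm:nie} feeds into a double exponential in the degree. Balancing these so that both the $L_1$ error and the positivity-certificate cost are simultaneously controlled by $\varepsilon$ is exactly what forces the doubly exponential growth of $d$, and it must be carried out with the precise constants in order for the clean expression (\ref{eq:bound}) to emerge. A secondary technical point is verifying that the threshold in Theorem~\ref{thm:nie} is indeed monotone in $\deg p$, which requires $3r>1$; this holds since $[-s,s]^n\subset K\subset B_n$ forces $s\sqrt n\le 1$, whence $r\ge\sqrt n\ge 1$ and $3r\ge 3>1$. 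One should also note $\deg\tilde p\le c_3(\varepsilon)\le d$ so that $\tilde p\in\mathbb{R}[x]_d$, which is automatic because the right-hand side of (\ref{eq:bound}) vastly exceeds $c_3(\varepsilon)$.
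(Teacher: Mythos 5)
Your proposal is correct and follows essentially the same route as the paper: shift the Gibbs-bounded best $L_1$ approximant of degree $\lceil 2c_1/\varepsilon\rceil$ upward by $\varepsilon/(2\,\mathrm{vol}\:B_n)$, split the error into the approximation part ($\le\varepsilon/2$ via Theorem~\ref{thm:approx_main}) and the shift part ($\le\varepsilon/2$), and certify feasibility of the shifted polynomial via Theorem~\ref{thm:nie} applied to both constraints with the ratio bound $(c_G+\delta)/\delta$. The only cosmetic difference is that you bound $v_d^\star\le\int_X\tilde p$ directly rather than via the paper's triangle-inequality detour, and you add the (correct) sanity checks $r\ge\sqrt n$ and $\deg\tilde p\le d$.
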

\begin{proof}
Let $\varepsilon > 0$ be fixed and decompose $\varepsilon = \varepsilon_1 + \varepsilon_2$ with $\varepsilon_1 >0$ and $\varepsilon_2 > 0$. Let $p \in \mathbb{R}[x]_d$ be any polynomial feasible in~(\ref{opt:sos}) for some $d \ge 1$. Then we have
\begin{align*}
v_d^\star - \mathrm{vol}\:K = v_d^\star - \int_X I_K \le \int_X(p - I_K) \le \int_X (\tilde{p}_{\tilde{d}} - I_K) + \int_X |p - \tilde{p}_{\tilde{d}}|
\end{align*}
where $\tilde{p}_{\tilde{d}}$ is an optimal polynomial approximation to $I_K$ from above of degree $\tilde{d}$ satisfying Assumption~\ref{as:Gibbs}, i.e.,  $\int_X (\tilde{p}_{\tilde{d}} - I_K) \le c_1 /\tilde{d}$ and $\max_{x\in X} \| \tilde{p}_{\tilde{d}}(x)\| \le c_G$. Selecting $\tilde{d} = \lceil c_1 / \varepsilon_1 \rceil$ we obtain 
\[
v_d^\star - \mathrm{vol}\:K \le \varepsilon_1 + \int_X |f - \tilde{p}_{\tilde{d}}|
\]
Hence it suffices to find a $d\ge 0$ and a polynomial $p$ feasible in~(\ref{opt:sos}) for this $d$ such that $\int_X |p - \tilde{p}_{\tilde{d}}| \le \varepsilon_2$. Let $p := \tilde{p}_{\tilde{d}} + \varepsilon_2 / \mathrm{vol}\,B_n$, then
\[
\int_X |p - \tilde{p}_{\tilde{d}}| = \int_X \frac{\varepsilon_2}{\mathrm{vol}\:B_n} \le \frac{\varepsilon_2}{\mathrm{vol}\:B_n}\int_{B_n} =   \varepsilon_2.
\]
In addition, since $\tilde{p}_{\tilde{d}} \ge I_K$ on $X$, we have $p \ge \varepsilon_2/\mathrm{vol}\:B_n > 0$ on $X$ and $p - 1 \ge \varepsilon_2/\mathrm{vol}\:B_n > 0$ on $K$ and hence $p$ is feasible in~(\ref{opt:sos}) for some $d\ge 0$~\cite{p93}. To bound the degree $d$ we invoke Theorem~\ref{thm:nie} on the two constraints of problem~(\ref{opt:sos}). Set $c_2 := \max(c_2^X,c_2^K)$, where the constants $c_2^X$ and $c_2^K$ are the constants from~Theorem~\ref{thm:nie} (note that these constants depend only on the algebraic description of the sets, not on the polynomial $p$). Since $\max_{x\in K}|p(x)-1| \le \max_{x \in X} |p(x)| \le c_G+\varepsilon_2/ \mathrm{vol}\:B_n$ and $d_p(\epsilon_1) :=\mathrm{deg}\:p = \tilde{d} = \lceil c_1/\varepsilon_1\rceil$, Theorem~\ref{thm:nie} applied to the two constraints of the problem~(\ref{opt:sos}) yields
\[
d \ge c_2 \: \mathrm{exp}\Big[\Big(  \frac{ k(d_p(\epsilon_1)) d_p(\epsilon_1)^2 n^{d_p(\epsilon_1)}   (c_G+\varepsilon_2 / \mathrm{vol}\:B_n)}{\varepsilon_2 / \mathrm{vol}\:B_n}    \Big)^{c_2}      \Big]
\]
with $k(d) = 3^{d+1}r^d$ and $r$ is defined in~(\ref{eq:rdef}). Setting $\varepsilon_1 = \varepsilon_2 = \varepsilon /2$ and $c_3(\epsilon) := \lceil 2 c_1/\varepsilon\rceil$, we obtain~(\ref{eq:bound}).

\end{proof}

\begin{corollary}
It holds $v_d^\star - \mathrm{vol}\:K = O\Big(1 / \log \log\, d\Big)$.
\end{corollary}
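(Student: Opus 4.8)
The plan is to invert the implication established in the preceding theorem. That result furnishes, for every $\varepsilon > 0$, a threshold degree
\[
d(\varepsilon) = O\Big(\exp\Big[\tfrac{1}{\varepsilon^{3c_2}}(3rn)^{2c_1/\varepsilon}\Big]\Big)
\]
such that $v_d^\star - \mathrm{vol}\:K < \varepsilon$ whenever $d \ge d(\varepsilon)$. To convert this into a rate in $d$, I would, for a given large $d$, choose $\varepsilon = \varepsilon(d)$ as small as possible subject to $d(\varepsilon) \le d$, so that the theorem certifies $v_d^\star - \mathrm{vol}\:K < \varepsilon(d)$. The claim then reduces to showing that one may take $\varepsilon(d) = O(1/\log\log d)$.

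First I would take logarithms twice in the bound for $d(\varepsilon)$ to guess the correct scaling. A single logarithm gives $\log d(\varepsilon) \lesssim \varepsilon^{-3c_2}(3rn)^{2c_1/\varepsilon}$, in which the exponential factor $(3rn)^{2c_1/\varepsilon}$ dominates the polynomial prefactor $\varepsilon^{-3c_2}$. A second logarithm then yields $\log\log d(\varepsilon) \lesssim \tfrac{2c_1}{\varepsilon}\log(3rn) + O(\log(1/\varepsilon))$, so $\log\log d(\varepsilon)$ grows like $1/\varepsilon$ up to lower-order terms. Inverting this relation points to the scaling $\varepsilon \asymp 1/\log\log d$.

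To make this precise I would fix a constant $C' > 2c_1\log(3rn)$ (note that $r \ge \sqrt{n}$ since $K\subset B_n$, so $3rn \ge 3$ and $\log(3rn)>0$) and set $\varepsilon(d) := C'/\log\log d$. Substituting $1/\varepsilon(d) = \log\log d / C'$ turns the dominant factor into $(3rn)^{2c_1/\varepsilon(d)} = (\log d)^{\alpha}$ with $\alpha = 2c_1\log(3rn)/C' < 1$, while the prefactor becomes $\varepsilon(d)^{-3c_2} = O((\log\log d)^{3c_2})$. Hence $\log d(\varepsilon(d)) = O\big((\log\log d)^{3c_2}(\log d)^{\alpha}\big) = o(\log d)$, because $\alpha<1$ forces the polynomial-in-$\log\log d$ factor to be negligible against the remaining power $(\log d)^{1-\alpha}$. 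Consequently $d(\varepsilon(d)) \le d$ for all $d$ sufficiently large, the theorem applies, and I conclude $v_d^\star - \mathrm{vol}\:K < \varepsilon(d) = C'/\log\log d = O(1/\log\log d)$.

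The main obstacle, and the only step demanding care, is this double inversion of the essentially doubly-exponential bound: one must verify that the polynomial correction $\varepsilon^{-3c_2}$ inside $\log d(\varepsilon)$ is genuinely dominated once $\alpha<1$, so that the certified threshold $d(\varepsilon(d))$ does not overshoot $d$. Choosing $C'$ strictly larger than $2c_1\log(3rn)$, rather than merely equal, supplies the slack $1-\alpha>0$ that absorbs this factor for all large $d$ and secures the stated $O(1/\log\log d)$ rate.
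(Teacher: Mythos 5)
Your proposal is correct and follows essentially the same route as the paper: both invert the asymptotic threshold $d(\varepsilon)=O\big(\exp[\varepsilon^{-3c_2}(3rn)^{2c_1/\varepsilon}]\big)$ by observing that the polynomial prefactor $\varepsilon^{-3c_2}$ is absorbed by the dominant exponential factor (the paper does this by doubling the exponent constant to $4c_1$, you by taking $C'>2c_1\log(3rn)$ and checking $\log d(\varepsilon(d))=o(\log d)$), yielding $\varepsilon\asymp 1/\log\log d$. Your write-up is simply a more explicit version of the paper's one-line argument.
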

\begin{proof}
Follows by inverting the asymptotic expression $O\big(\mathrm{exp}[\frac{1}{\varepsilon^{3c_2}} (3rn)^{\frac{2c_1}{\varepsilon}}]\big)$ using the fact that $(3rn)^{\frac{4c_1}{\varepsilon}} \ge \frac{1}{\varepsilon^{3c_2}} (3rn)^{\frac{2c_1}{\varepsilon}}$ for small $\varepsilon$.
\end{proof}

\section{Concluding remarks}

Our doubly logarithmic asymptotic convergence rate is likely to be pessimistic, even though we suspect the actual convergence rate to be sublinear in $d$, see for example \cite[Fig. 4.5]{hls09} which reports on the values of the moment-sums-of-squares hierarchy for an elementary univariate interval length estimation problem. In this case we could compute numerically the values for degrees up to 100 thanks to the use of Chebyshev polynomials. Beyond univariate polynomials, it is however difficult to study experimentally the convergence rate since the number of variables in the semidefinite program grows polynomially in the degree $d$, but with an exponent equal to the dimension of the set $K$.


\begin{thebibliography}{XX}

\bibitem{b10}
E. S. Bhaya. Interpolating operators for multiapproximation. Journal of Mathematics and Statistics, 6:240-245, 2010.

\bibitem{hls09}
D. Henrion, J. B. Lasserre, C. Savorgnan.
Approximate volume and integration for basic semialgebraic sets.
SIAM Review 51(4):722-743, 2009. 

\bibitem{khj16}
M. Korda, D. Henrion, C. N. Jones.
Convergence rates of moment-sum-of-squares hierarchies for optimal control problems. {\tt arXiv:1609.02762}, 9 Sept. 2016.

\bibitem{l10}
J. B. Lasserre. Moments, positive polynomials and their applications.
Imperial College Press, London, UK, 2010.

\bibitem{l09}
M. Laurent. Sums of squares, moment matrices and polynomial optimization. In M. Putinar, S. Sullivan (eds.). Emerging applications of algebraic geometry, Vol. 149 of IMA Volumes in Mathematics and its Applications, Springer, Berlin, 2009.

\bibitem{ns07}
J. Nie, M. Schweighofer. On the complexity of Putinar's Positivstellensatz. Journal of Complexity 23:135-150, 2007.

\bibitem{p93}
M. Putinar. Positive polynomials on compact semi-algebraic sets. Indiana University Mathematics Journal, 42:969-984, 1993.

\bibitem{w39}
H. Weyl. On the volume of tubes. American Journal of Mathematics, 61:461-472, 1939.

\end{thebibliography}
\end{document}